\newcommand{\ex}{\textnormal{ex}}
\renewcommand{\DiffGroup}{\mathrm{Diff}}
\renewcommand{\MetricSpace}{\mathcal{M}}
\declaretheorem[numbered=no, style=myTheorem, name={Main Theorem}]{mainThm}
\begin{document}

\MakeTitle



\section{Introduction}

In their seminal work, \textcite{AtiyahBott1983} have shown that the curvature 
yields a momentum map for the action of the group of gauge transformations on 
the space of connections on a principal bundle over a closed surface.
Thus, the moduli space of flat connections, and the related moduli space of 
central Yang--Mills connections, are realized as symplectic  reduced spaces. 
This observation has inspired a lot of research to reformulate problems in 
differential geometry in the language of infinite-dimensional symplectic 
geometry.

For example, \textcite{Donaldson2003} observed that the scalar curvature 
produces a momentum map \( \SectionMapAbb{J}_{\ex} \) for the action of the 
group \( \DiffGroup_{\mu, \ex}(M) \) of \emph{exact} volume-preserving 
diffeomorphisms of a surface \( (M, \mu) \) on the space \( \MetricSpace_\mu \) of all compatible 
Riemannian structures. Even though the symplectic quotient 
\( \SectionMapAbb{J}_{\ex}^{-1}(c) \slash \DiffGroup_{\mu, \ex}(M) \) is 
related to the Riemann moduli space, they do not coincide because the latter 
is the quotient by the full group \( \DiffGroup_{\mu}(M) \) of all 
volume-preserving diffeomorphisms. According to \textcite[p.~181]{Donaldson2003}, 
the "difficulty is that there is no way to extend the moment[um map] 
\( \SectionMapAbb{J}_{\ex} \) to an equivariant moment[um] map for the full 
action of \( \DiffGroup_{\mu}(M) \)". In this note, we will show that the 
action of \( \DiffGroup_{\mu}(M) \) admits a momentum map in a generalized sense.
This result allows us to realize the Riemann moduli space as a symplectic orbit
quotient. The main result is the following.
\begin{mainThm}
\label{thm:mainThm}
Let \( M \) be a closed surface endowed with a volume form \( \mu \).
The space \( \MetricSpace_\mu \) of Riemannian metrics on \( M \) compatible 
with \( \mu \) is endowed with the symplectic form
\begin{equation}\label{eq:applications:coadSL:symplecticFormOnMetrics}
\Omega_g (h_1, h_2) = - \frac{1}{2}\int_M
\tr\Bigl( \bigl( g^{-1} h_1\bigr) \bigl( g^{-1}\mu\bigr) 
\bigl( g^{-1}h_2\bigr) \Bigr) \, \mu \,,
\end{equation}
where \( g \in \MetricSpace_\mu \) and 
\( h_1, h_2 \in \TBundle_g \MetricSpace_\mu \).
The left action of \( \DiffGroup_\mu (M) \) on \( \MetricSpace_\mu \) 
by push-forward preserves \( \Omega \) and has a 
group-valued momentum map
\begin{equation}
\label{eq:applications:coadSL:momentumMapForMetrics}
\SectionSpaceAbb{J}: \MetricSpace_\mu \to \csCohomology^2(M, \UGroup(1)), 
\quad g \mapsto \KBundle_g M,
\end{equation}
where \( \csCohomology^2(M, \UGroup(1)) \) is the Abelian group of gauge equivalence 
classes of circle bundles with connection over \( M \)  and \( \KBundle_g M \) 
is the canonical circle bundle.
\end{mainThm}

We need to explain the terminology and notation of the theorem.
\begin{enumerate}
\item
A Riemannian metric $g$ on $M$ is \emph{compatible} with the volume form 
$\mu$ on $M$ if the volume form induced by \( g \) coincides with \( \mu \).
For such metrics, we have $\nabla \mu =0$, where $\nabla$ is the Levi-Civita 
connection defined by $g$.

The space \( \MetricSpace_\mu \) of Riemannian metrics compatible with \( \mu \) 
is identified with the space of sections of the associated bundle 
\(\FrameBundle M \times_{\SLGroup}\bigl(\SLGroup(2,\R)\slash \SOGroup(2)\bigr)\), 
where \( \FrameBundle M \) denotes the \( \SLGroup(2, \R) \)-frame bundle 
induced by the volume form \( \mu \). As such, \( \MetricSpace_\mu \) naturally 
comes with the structure of an infinite-dimensional Fr\'echet manifold.
The space of \( g \)-trace-free symmetric covariant \( 2 \)-tensors is 
the tangent space to $ \MetricSpace_\mu $ at \( g \).
In particular, in formula~\eqref{eq:applications:coadSL:symplecticFormOnMetrics}, 
$h_1$ and $h_2$ are trace-free symmetric covariant 2-tensors.
\item  Given a symmetric covariant \( 2 \)-tensor $h$ and a Riemannian metric $g$
on $M$, the notation $g^{-1} h$ is the \( (1,1) \)-tensor defined by 
$(g^{-1} h)( \alpha , X) = h(g^{-1} \alpha , X)$ for a \( 1 \)-form 
$\alpha \in \DiffFormSpace^1 (M)$ and a vector field $X \in\VectorFieldSpace(M)$, 
where $g^{-1}: \DiffFormSpace^1 (M) \to \VectorFieldSpace(M)$ is the isomorphism 
induced by the Riemannian metric. In coordinates, this amounts to the 
operation of raising the first index by the Riemannian metric $g$, 
\ie, $(g^{-1} h)^i_{\;j} = g^{ik}h_{kj}$ with $[g^{ij}]=[g_{ij}]^{-1}$.

Hence, in~\eqref{eq:applications:coadSL:symplecticFormOnMetrics}, the  
integrand is the trace of the product of three matrices, \ie, 
$\tr\Bigl( \left( g^{-1} h_1\right) \left( g^{-1}\mu\right) 
\left( g^{-1}h_2\right) \Bigr)= \tensor{(h_1)}{^i_j} \, 
\tensor{\mu}{^j_k} \, \tensor{(h_2)}{^k_i} \, $.
\item  Let \( I \) be the almost complex structure on \( M \) induced by the 
symplectic form \( \mu \) and the Riemannian metric \( g \) on $M$. The complex 
line bundle \( \ExtBundle^{1,0} M \) of holomorphic forms is called the 
\emph{canonical line bundle}. The form \( \mu \) is a non-vanishing 
section of \(\ExtBundle^{1,1}M = \ExtBundle^{1,0}M \tensorProd\ExtBundle^{0,1}M\) 
and thus induces a Hermitian metric on \( \ExtBundle^{1,0} M \). The associated 
Hermitian frame bundle \( \KBundle_g M \) is a principal circle bundle, 
called the \emph{canonical circle bundle}.
The Levi--Civita connection of \( g \) naturally induces a connection 
in \( \KBundle_g M \).
\item The set \( \csCohomology^2(M, \UGroup(1)) \) of gauge equivalence 
classes of circle bundles with connection is an Abelian group relative to
the following addition introduced in \textcite{Kobayashi1956}. Let \( P \) 
and \( \tilde{P} \) be two principal \( \UGroup(1) \)-bundles,   form their
fiber product \( P \times_M \tilde{P} \), and identify points which differ 
by the \( \UGroup(1) \)-action $(p, \tilde{p}) \cdot  z:= 
(p \cdot z, \tilde{p} \cdot z^{-1})$, for $p\in  P$, $\tilde{p} \in\tilde{P}$, 
and $z \in \UGroup(1)$. This defines a new principal 
\(\UGroup(1)\)-bundle \(P + \tilde{P} \defeq (P \times_M \tilde{P})/\UGroup(1)\),    
where the $\UGroup(1)$-action on $P + \tilde{P}$ is translation on the first 
factor. This operation is associative and commutative. The trivial bundle is 
the identity element, \ie, \( P + (M \times \UGroup(1)) \) is isomorphic to 
\( P \). Given a principal bundle \( P \), denote by \( -P \)  the 
\( \UGroup(1) \)-bundle 
having the same underlying bundle structure as \( P \) but carrying the opposite 
\( \UGroup(1) \)-action \( p \ast z \defeq p \cdot z^{-1}\), where   the 
right side is the given \( \UGroup(1) \)-action on \( P \). Then \( P + (-P) \) 
is isomorphic to the trivial bundle. Connections \( A \) on $P$ and 
\( \tilde{A} \) on $\tilde{P}$ induce the connection \(\pr_1^* A + 
\pr_2^* \tilde{A}\) on \(P \times_M \tilde{P}\), where \(\pr_1: P \times_M 
\tilde{P} \rightarrow  P\) and \(\pr_2: P \times_M \tilde{P} \to  \tilde{P}\) 
are the projections on the two factors,   which descends to a 
connection on \( P + \tilde{P} \), denoted by \( A + \tilde{A} \). The 
curvature of \( A + \tilde{A} \) is the sum of the corresponding curvatures.

Moreover, \( \csCohomology^2(M, \UGroup(1)) \) is an Abelian \emph{Fr\'echet Lie} 
group with Lie algebra \( \DiffFormSpace^1(M) \slash \dif \DiffFormSpace^0(M) \); 
see \parencite[Appendix~A]{BeckerSchenkelEtAl2014}.
\item
The concept of a group-valued momentum map is inspired by the notion of a 
momentum map in Poisson geometry as introduced by \citeauthor{LuWeinstein1990} 
\parencite{LuWeinstein1990,Lu1990} and will be discussed in detail below.
\end{enumerate}

The curvature of the canonical bundle \( \KBundle_g M \) is given by 
\( - S_g \mu \), where \( S_g \) denotes the scalar curvature of \( g \). 
Hence, symplectic reduction at the subset \( \curv^{-1}(\mu) \) of all bundles 
with constant curvature \( \mu \) yields the Riemann moduli space:
\begin{equation}
\SectionSpaceAbb{J}^{-1} (\curv^{-1}(\mu)) \slash \DiffGroup_\mu(M) =
\set[\big]{g \in \MetricSpace_\mu \given S_g = -1} \slash \DiffGroup_\mu(M).
\end{equation}
Note that, in contrast to classical symplectic reduction, we take the inverse 
image of a set and not just of a point. However, one can show that 
\( \DiffGroup_\mu(M) \) acts (infinitesimally) transitively on 
\( \curv^{-1}(\mu) \), so that the reduction is a \emph{symplectic orbit 
reduction} \parencite[Section~6.3]{OrtegaRatiu2003}.

Instead of taking the quotient with respect to \( \DiffGroup_\mu(M) \), we can 
also restrict attention to the connected component of the identity 
\( \DiffGroup_\mu(M)^\circ \). The action of \( \DiffGroup_\mu(M)^\circ \) 
is free and its momentum map is given by the same 
formula~\eqref{eq:applications:coadSL:momentumMapForMetrics}.
Thus, the symplectic quotient with respect to the 
\( \DiffGroup_\mu(M)^\circ \)-action is a smooth manifold that coincides 
with the Teichmüller space.
Moreover, the expression~\eqref{eq:applications:coadSL:symplecticFormOnMetrics} 
for the symplectic form on the space of Riemannian metrics implies that the 
reduced symplectic form is proportional to the Weil--Petersson symplectic form 
on the Teichmüller space.

\begin{remark}
In \parencite{DiezRatiu}, we consider the general setting given by a symplectic 
fiber bundle \( F \to M \). Then the space of sections of \( F \) carries a 
natural symplectic structure induced by the fiber-symplectic structure on 
\( F \) and we determine the 
group-valued momentum for the action of the automorphism group of \( F \).
From this perspective, the \hyperref[thm:mainThm]{Main Theorem} is 
deduced as a special case of the theory developed in \parencite{DiezRatiu}.
The results of that paper also imply that the symplectic geometry of 
\( \MetricSpace_\mu \) and the momentum map \( \SectionMapAbb{J} \) are 
largely determined by the finite-dimensional coadjoint orbit 
\( \SLGroup(2, \R) \slash \SOGroup(2) \). It follows that the Teichmüller 
space has two close relatives corresponding to the hyperbolic and the parabolic 
coadjoint orbits of \( \SLGroup(2, \R) \); we refer to \parencite{DiezRatiu} for 
details.
\end{remark}
\begin{remark}
Besides the scalar curvature as a geometric datum, the group-valued momentum 
map \( \SectionSpaceAbb{J} \) contains topological information in the form 
of the Chern class of \( M \). Such discrete topological data cannot be 
encoded in classical momentum maps. In particular, the momentum map 
\( \SectionMapAbb{J}_{\ex} \) for the action of the group 
\( \DiffGroup_{\mu, \ex}(M) \) of exact volume-preserving diffeomorphisms 
does not contain topological information.

On the other hand, it is a generic feature of group-valued momentum maps 
that they capture geometric as well as topological data. For example, in 
\parencite{DiezRatiu} we show that the group-valued momentum map can recover the 
Liouville class of a Lagrangian embedding or the integral helicity of fluid 
configurations as additional topological information. 
\end{remark}

\section{Group-valued momentum maps}
In order to handle the full groups of volume-preserving diffeomorphisms, we 
introduce the concept of a group-valued momentum map. 

Our starting point is the notion of a momentum map in Poisson geometry as 
introduced by \citeauthor{LuWeinstein1990} \parencite{LuWeinstein1990,Lu1990}.
Let \( (G, \varpi_G) \) be a (finite-dimensional) Poisson Lie group with dual 
group \( G^* \) and let \( (M, \varpi_M) \) be a (finite-dimensional) Poisson 
manifold. Recall that a left action of \( G \) on \( M \) is called a Poisson 
action if the action map \( G \times M \to M \) is a Poisson map, where 
\( G \times M \) is endowed with the product Poisson structure 
\( \varpi_G \times \varpi_M \). A smooth map \( J: M \to G^* \) is called a 
\emphDef{momentum map} of this action if
\begin{equation}
	\label{eq::luMomentumMap:defining}
	A^* + \varpi_M\left(\cdot, J^* A^l\right) = 0
\end{equation}
holds for all \( A \in \LieA{g} \), where \( A^* \) denotes the fundamental 
vector field on \( M \) induced by the infinitesimal action of \( A \) and 
\( A^l \in \DiffFormSpace^1(G^*) \) is the left-invariant extension of \( A \) 
seen as a functional on \( \LieA{g}^* \). If the Poisson structure 
\( \varpi_M \) is induced by a symplectic form \( \omega \) on \( M \), 
then~\eqref{eq::luMomentumMap:defining} is equivalent to
\begin{equation}
	\label{eq::luMomentumMap:symplectic}
	A^* \contr \omega + \dualPair{A}{\difLog J} = 0,
\end{equation}
where \( \difLog J \in \DiffFormSpace^1(M, \LieA{g}^*) \) is the left-logarithmic 
derivative of \( J \) defined by 
\( (\difLog J)_m (X_m) = J(m)^{-1} \ldot \tangent_m J (X_m) \).
Note that this equation does no longer use the fact that \( G \) is a Poisson 
Lie group. Indeed, this identity still makes sense if the momentum map is 
replaced by a smooth map \( J: M \to H \) with values in an arbitrary Lie 
group \( H \), as long as there is a duality between the Lie algebras of \( G \) 
and \( H \). This observation leads to our generalization of Lu's momentum map.

A \emphDef{dual pair of Lie algebras} (not necessarily finite-dimensional) 
consists of two Lie algebras \( \LieA{g} \) and \( \LieA{h} \), which are in 
duality through a given (weakly) non-degenerate bilinear map \( \kappa: \LieA{g} 
\times \LieA{h} \to \R \). Using notation stemming from functional analysis, 
we write the dual pair as \( \kappa(\LieA{g}, \LieA{h}) \).
Two Lie groups \( G \) and \( H \) are said to be \emphDef{dual} to each other 
if there exists a non-degenerate bilinear form \( \kappa: \LieA{g} \times 
\LieA{h} \to \R \) relative to which the associated Lie algebras are in duality.
We use the notation \( \kappa(G, H) \) in this case.
\begin{defn}
Let \( M \) be \(  G \)-manifold endowed with a symplectic form \( \omega \).
A \emphDef{group-valued momentum map} is a pair \( (J, \kappa) \), where 
\( \kappa(G, H) \) is a dual pair of Lie groups and \( J: M \to H \) is a 
smooth map satisfying
\begin{equation}
\label{eq::momentumMap:DefEq}
A^* \contr \omega + \kappa(A, \difLog J) = 0
\end{equation}
for all \( A \in \LieA{g} \).
\end{defn}
We emphasize that the concept of a group-valued momentum map is a vast 
generalization of many notions of momentum maps appearing in the literature 
including circle-valued, cylinder-valued, and Lie algebra-valued momentum maps.
Most of the well-known results about the classical momentum map (such as 
equivariance properties and the Bifurcation Lemma) generalize in a natural way 
to group-valued momentum maps; see \parencite{DiezRatiu,DiezThesis}.

For our purposes, the following dual pair is of particular relevance.
\begin{example}
Let \( M \) be a compact manifold endowed with a volume form \( \mu \).
The group \( G = \DiffGroup_\mu(M) \) of volume-preserving diffeomorphisms is 
a Fr\'echet Lie group with Lie algebra consisting of \( \mu \)-divergence-free 
vector fields \( X \) on \( M \):
\begin{equation}
\LieA{g} = \VectorFieldSpace_\mu(M) \isomorph \set{X \in \VectorFieldSpace(M) 
\given \dif (X \contr \mu) = 0}.
\end{equation}
Hence, \( \VectorFieldSpace_\mu(M) \) can be identified with the space 
\( \clDiffFormSpace^{\dim M-1}(M) \) of closed sub-top forms so that 
\( \LieA{h} \defeq \DiffFormSpace^1(M) \slash \dif \DiffFormSpace^0(M) \) is 
the regular dual with respect to the weakly non-degenerate integration paring
\begin{equation}
\label{eq::diffAction:dualPairOfVolPresVectorFields}
\kappa(X, \equivClass{\alpha}) \defeq \int_M (X \contr \alpha) \, \mu.
\end{equation}
We now observe that a \( 1 \)-form \( \alpha \) on \( M \) can be seen as a 
connection on the trivial principal circle bundle \( M \times \UGroup(1) \to M \). 
From this point of view, \( \LieA{h} \) parametrizes gauge equivalence classes 
of connections on the trivial circle bundle. Thus, it is natural to think of it 
as the Lie algebra of the Abelian group \(H\defeq \csCohomology^2(M,\UGroup(1))\) 
of all principal circle bundles with connections, modulo gauge equivalence.
This heuristic argument can be made rigorous using the theory of Cheeger--Simons 
differential characters \parencite{BaerBecker2013}; see \parencite{DiezRatiu}.
Summarizing, we get a dual pair \( \kappa\bigl(\DiffGroup_\mu(M), 
\csCohomology^2(M, \UGroup(1))\bigr) \) of Lie groups. In other words, a 
group-valued momentum map for an action of \( \DiffGroup_\mu(M) \) takes values 
in \( \csCohomology^2(M, \UGroup(1)) \).
\end{example}

\section{Proof of the Main theorem}
In the sequel, we will prove the \hyperref[thm:mainThm]{Main Theorem} by means 
of two lemmas, which compute the two terms in the momentum map 
relation~\eqref{eq::momentumMap:DefEq}.

\begin{lemma}
In the setting of the \hyperref[thm:mainThm]{Main Theorem}, we have
\begin{equation}
\Omega_g(X \ldot g, h) = - \int_M X^i \bigl(\tensor{\mu}{_i_k} 
\nabla_j \tensor{h}{^k^j}\bigr) \, 
\mu \, ,
\qedhere
\end{equation}
where \( X \ldot g = - \difLie_X g \) denotes the fundamental vector field 
induced by the action of the divergence-free vector field 
\( X \in \VectorFieldSpace_\mu(M) \),  evaluated at $g \in \MetricSpace_\mu$,  
and \( h \in \TBundle_g \MetricSpace_\mu \) is
a trace-free symmetric covariant \( 2 \)-tensor. 
\end{lemma}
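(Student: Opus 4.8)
The plan is to reduce the statement to a pointwise algebraic identity for endomorphisms of \( \TBundle M \), integrate by parts once, and then exploit the fact that \( M \) is two-dimensional. First I would record the infinitesimal action: since \( X \ldot g = - \difLie_X g \), in index notation \( \tensor{(X \ldot g)}{_i_j} = -(\nabla_i X_j + \nabla_j X_i) \), where \( \nabla \) is the Levi--Civita connection of \( g \). Because \( X \) is \( \mu \)-divergence-free, \( \nabla_i X^i = 0 \), so the \( g \)-trace of \( X \ldot g \) vanishes and \( X \ldot g \) is a genuine tangent vector in \( \TBundle_g \MetricSpace_\mu \). Writing \( I \defeq g^{-1}\mu \) for the induced almost complex structure, \( H \defeq g^{-1}h \), and \( S \defeq g^{-1}(X \ldot g) = -\bigl(\nabla X + (\nabla X)^{\top}\bigr) \), where \( \tensor{(\nabla X)}{^i_j} = \nabla_j X^i \) and \( \top \) is the transpose with respect to \( g \), the defining formula~\eqref{eq:applications:coadSL:symplecticFormOnMetrics} becomes
\[
\Omega_g(X \ldot g, h) = - \frac{1}{2} \int_M \tr\bigl( S \, I \, H \bigr) \, \mu \,.
\]

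Next I would use the algebraic properties of these endomorphisms. Since \( g \) and \( \mu \) are parallel, \( \nabla I = 0 \); moreover \( I^{\top} = -I \) because \( \mu \) is antisymmetric, and \( H^{\top} = H \) because \( h \) is symmetric. The decisive input is that \( M \) is a surface: in dimension two every \( g \)-trace-free symmetric endomorphism anticommutes with the complex structure, so \( I H = - H I \), whence \( (IH)^{\top} = H^{\top} I^{\top} = -HI = IH \). Combining this with the transpose-invariance and cyclicity of the trace shows that the two summands of \( S \) contribute equally, collapsing the integrand to
\[
\tr\bigl( S \, I \, H \bigr) = - 2 \, \tr\bigl( (\nabla X) \, I \, H \bigr) \,,
\]
so that \( \Omega_g(X \ldot g, h) = \int_M \tr\bigl( (\nabla X) I H \bigr) \, \mu \).

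Finally I would integrate by parts. Rewriting \( IH = -HI \) once more puts the derivative index of \( \nabla X \) into contraction with an index of \( h \), giving \( \tr\bigl( (\nabla X) I H \bigr) = - (\nabla_j X^i) \, \tensor{h}{^j_k} \, \tensor{I}{^k_i} \). Applying the divergence theorem \( \int_M \nabla_j V^j \, \mu = 0 \) to the vector field \( V^j = X^i \, \tensor{h}{^j_k} \, \tensor{I}{^k_i} \), and using \( \nabla I = 0 \), moves the derivative onto \( h \) and produces its divergence \( \nabla_j \tensor{h}{^j_k} \). Lowering and raising the remaining indices with \( g \), recalling \( \tensor{\mu}{_i_k} = g_{ij}\tensor{I}{^j_k} \), and using the symmetry of \( h \) together with the antisymmetry of \( \mu \) to fix the sign, should turn the result into \( - \int_M X^i \bigl( \tensor{\mu}{_i_k} \nabla_j \tensor{h}{^k^j} \bigr) \, \mu \), which is the claimed formula.

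The step I expect to carry the real content is the two-dimensional anticommutation \( IH = -HI \), together with the careful bookkeeping of index positions and \( g \)-transposes that it requires. It is exactly this identity, special to surfaces, that forces the two derivative-of-\( X \) terms to coincide and lets the single integration by parts yield the clean divergence \( \nabla_j \tensor{h}{^k^j} \) of \( h \) rather than the unwanted contraction \( \tensor{I}{^j_k} \nabla_j \tensor{h}{^k_i} \); in higher dimensions no such collapse occurs, which is consistent with the canonical-bundle interpretation being genuinely a surface phenomenon.
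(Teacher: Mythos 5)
Your proposal is correct and follows essentially the same route as the paper: the two-dimensional anticommutation \( IH = -HI \) you isolate is precisely the paper's observation that \( \tensor{\mu}{_i_k}\tensor{h}{^k_j} \) is symmetric in \( i,j \) because a trace-free symmetric \( h \) is Hamiltonian with respect to \( \mu \), and both arguments then collapse the two terms of \( \difLie_X g \) into a single one and conclude with one integration by parts using \( \nabla \mu = 0 \). The only difference is presentational — coordinate-free endomorphism bookkeeping versus the paper's index computation.
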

\begin{proof}
Let \( \nabla \) be the Levi--Civita connection associated to the metric \( g \).
Since \( g \) is parallel with respect to \( \nabla \), we have
\begin{equation}
(\difLie_X g)_{ij} = \nabla_i X_j + \nabla_j X_i
\end{equation}
for every vector field \( X \).
Moreover, the tensor \( \tensor{\mu}{_i_k} \tensor{h}{^k_j} \) is symmetric 
in the indices \( i \) and \( j \), because \( h \) is trace-free and thus 
Hamiltonian with respect to the symplectic form \( \mu \).
Hence, we obtain
\begin{equation}
\begin{split}
\int_M \tr\Bigl( \left( g^{-1} (\difLie_X g) \right) \left( g^{-1}\mu \right) 
\left( g^{-1}h\right) \Bigr)   \, \mu
&= \int_M \bigl(\nabla^i X_j + \nabla_j X^i\bigr) \tensor{\mu}{^j_k} 
\tensor{h}{^k_i} \, \mu  \\
&= 2\int_M \bigl(\nabla_j X^i\bigr) \tensor{\mu}{_i_k} \tensor{h}{^k^j} \, \mu \\
&= - 2 \int_M X^i \tensor{\mu}{_i_k} \nabla_j \tensor{h}{^k^j} \, \mu \, ,
\end{split}
\end{equation}
where we integrated by parts and used  \(\nabla  \mu =0\).
\end{proof}

\begin{lemma}
In the setting of the \hyperref[thm:mainThm]{Main Theorem}, the map
\begin{equation}
\SectionSpaceAbb{J}: \MetricSpace_\mu \to \csCohomology^2(M, \UGroup(1)), 
\quad g \mapsto \KBundle_g M,
\end{equation}
has logarithmic derivative \( (\difLog \SectionSpaceAbb{J})_g: \TBundle_g 
\MetricSpace_\mu\to\DiffFormSpace^1(M) \slash \dif \DiffFormSpace^0(M)\) given by
\begin{equation}
(\difLog \SectionSpaceAbb{J})_g (h)_i = 
\tensor{\mu}{_i_k} \nabla_j \tensor{h}{^k^j} \mod 
\dif \DiffFormSpace^0(M),
\end{equation}
where \( g \in \MetricSpace_\mu \) and \( h \in \TBundle_g \MetricSpace_\mu \).
\end{lemma}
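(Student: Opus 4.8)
The plan is to reduce the computation of \( \difLog\SectionSpaceAbb{J} \) to the variation of the connection \( 1 \)-form of the canonical bundle, and then to carry out that variation explicitly. Since \( \csCohomology^2(M,\UGroup(1)) \) is Abelian with Lie algebra \( \DiffFormSpace^1(M)\slash\dif\DiffFormSpace^0(M) \), and since the canonical bundles \( \KBundle_gM \) attached to nearby metrics share the same Chern class and are hence topologically isomorphic, the difference \( \KBundle_{g_t}M+(-\KBundle_{g_0}M) \) is a topologically trivial circle bundle with connection, classified by the difference of connection \( 1 \)-forms modulo gauge. Consequently, for a curve \( g_t \) with \( \dot g_0=h \), the left-logarithmic derivative is simply
\[
(\difLog\SectionSpaceAbb{J})_g(h)=\delta\theta_g(h)\mod\dif\DiffFormSpace^0(M),
\]
where \( \theta_g\in\DiffFormSpace^1(M) \) is the local \( \UGroup(1) \)-connection \( 1 \)-form of \( \KBundle_gM \) in a unitary frame. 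First I would make \( \theta_g \) explicit: choosing an oriented \( g \)-orthonormal coframe with \( \mu=e^1\wedge e^2 \), the section \( \zeta=e^1+\mathrm{i}\,e^2 \) is a unitary frame of \( \ExtBundle^{1,0}M \), and the Levi--Civita connection gives \( \nabla\zeta=\mathrm{i}\,\theta_g\otimes\zeta \). Expressed through the Christoffel symbols and the complex structure \( \tensor{I}{^i_j}=g^{ik}\tensor{\mu}{_k_j} \) (so that \( I=g^{-1}\mu \)), its components are \( (\theta_g)_i\sim\tensor{I}{^a_b}\,\tensor{\Gamma}{^b_a_i} \) up to a convention-dependent constant. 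A change of unitary frame by a rotation \( \phi \) sends \( \theta_g\mapsto\theta_g+\dif\phi \); this gauge freedom is precisely the ambiguity modulo \( \dif\DiffFormSpace^0(M) \), so the class \( \delta\theta_g(h) \) is well defined and may be computed in any convenient local gauge.

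The second step is the variation itself. Two ingredients enter: the standard variation of the Levi--Civita connection, \( \delta\tensor{\Gamma}{^k_i_j}=\frac12 g^{kl}\bigl(\nabla_i h_{jl}+\nabla_j h_{il}-\nabla_l h_{ij}\bigr) \), and the variation of the complex structure, which (because \( \mu \) is held fixed) is \( \delta I=-(g^{-1}h)\,I \). Working in geodesic normal coordinates at a point kills the non-tensorial term coming from \( \delta I \) paired with \( \Gamma \), leaving the manifestly tensorial expression
\[
\delta\theta_g(h)_i\;\propto\;\tensor{I}{^a_b}\,g^{bl}\bigl(\nabla_a h_{il}+\nabla_i h_{al}-\nabla_l h_{ai}\bigr),
\]
with the same overall constant. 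The key structural facts now collapse this sum. The raised area tensor \( \tensor{\mu}{^a^l}=\tensor{I}{^a_b}g^{bl} \) is antisymmetric, so its contraction with the symmetric \( h_{al} \) annihilates the middle term; and, after relabeling \( a\leftrightarrow l \), the first and third terms coincide. One is thus left with \( \nabla^{a}\bigl(\tensor{\mu}{_a_c}\tensor{h}{^c_i}\bigr) \), which equals \( \tensor{\mu}{_i_k}\nabla_j\tensor{h}{^k^j} \) once one uses \( \nabla\mu=0 \) together with the symmetry of \( \tensor{\mu}{_a_c}\tensor{h}{^c_i} \) — the very Hamiltonian property of trace-free \( h \) already exploited in the previous lemma. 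This yields the claimed formula modulo \( \dif\DiffFormSpace^0(M) \).

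The main obstacle will be the careful bookkeeping in the second step: one must track the simultaneous variation of the connection and of the induced complex structure, and correctly isolate the gauge (exact) part so that only the intrinsic class survives. The structural inputs that make this tractable are the anticommutation \( AI=-IA \) for the trace-free symmetric endomorphism \( A=g^{-1}h \), which characterizes \( \TBundle_g\MetricSpace_\mu \), together with the antisymmetry of \( \tensor{\mu}{^a^l} \) and the symmetry of \( \tensor{\mu}{_a_c}\tensor{h}{^c_i} \). As an independent consistency check I would verify that \( \dif \) of the resulting \( 1 \)-form reproduces the variation of the curvature, \( \dif(\difLog\SectionSpaceAbb{J})_g(h)=\delta(\curv\SectionSpaceAbb{J})_g(h)=-\delta S_g(h)\,\mu \), using that in two dimensions with \( h \) trace-free one has \( \delta S_g=\nabla^i\nabla^j h_{ij} \); this determines the class up to \( H^1_{\mathrm{dR}}(M) \), while the direct variation above pins down the representative and fixes the normalization constant to one. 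Combined with the preceding lemma, the two computations give the momentum map relation \( \Omega_g(X\ldot g,h)+\kappa\bigl(X,(\difLog\SectionSpaceAbb{J})_g(h)\bigr)=0 \), thereby proving the Main Theorem.
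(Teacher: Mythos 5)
Your proposal is correct in substance and reaches the stated formula, but by a genuinely different route than the paper. The paper's self-contained proof works only under the additional hypothesis \( \sHomology_1(M, \Z) = 0 \) (so in effect \( M = S^2 \); the general case is delegated to \parencite{DiezRatiu}): it differentiates the \emph{holonomy} map rather than the connection, combining the Stokes theorem with the curvature \( -S_g \mu \) of \( \KBundle_g M \) and the Ebin--Marsden formula for \( \tangent_g S(h) \), which for trace-free \( h \) in two dimensions reduces to \( \nabla_i \nabla_j h^{ij} \); it then verifies that \( \alpha_i = \tensor{\mu}{_i_k} \nabla_l \tensor{h}{^k^l} \) satisfies \( \dif\alpha = -\nabla_k \nabla_l h^{kl}\, \mu \) and invokes \( \deRCohomology^1(M, \R) = 0 \) to convert equality of holonomy derivatives along all loops into equality in \( \DiffFormSpace^1(M) \slash \dif\DiffFormSpace^0(M) \). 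You instead differentiate the induced connection directly via the Palatini formula for \( \delta\Gamma \) and the variation \( \delta I = -(g^{-1}h)I \); your algebra checks out: \( \tensor{I}{^a_b} g^{bl} = \mu^{al} \) is antisymmetric and annihilates the middle term, the first and third terms coincide after relabeling, and \( \mu^{al} \nabla_a h_{il} = \tensor{\mu}{_i_k} \nabla_j \tensor{h}{^k^j} \) by \( \nabla\mu = 0 \) and the Hamiltonian property of trace-free \( h \). What your route buys is generality: it requires no assumption on \( \sHomology_1 \), so it would prove the lemma for surfaces of arbitrary genus, essentially specializing the general argument of \parencite{DiezRatiu} that the paper only cites; the cost is exactly the gauge bookkeeping you flag, and there is one soft spot there. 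Discarding the non-tensorial term \( \delta\tensor{I}{^a_b}\, \tensor{\Gamma}{^b_a_i} \) and the frame-variation terms ``in normal coordinates'' is not yet justified: pointwise vanishing of \( \Gamma \) at the center of such coordinates does not show that these contributions to \( \delta\theta_g(h) \) are exact as \( 1 \)-forms. To close this, you should first fix a smooth global identification of \( \KBundle_{g_t} M \) with \( \KBundle_g M \) (for instance via fiberwise orthogonal projection \( \ExtBundle^{1,0}_{g} M \to \ExtBundle^{1,0}_{g_t} M \)), so that \( \delta\theta_g(h) \) becomes a globally defined \( 1 \)-form, well defined up to exact terms independently of local frames; only then may you evaluate it at an arbitrary point in an adapted gauge and identify it with your tensorial candidate. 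Note finally that your curvature consistency check is precisely the paper's core computation in disguise (the same divergence identity \( \dif\alpha = -\nabla_k \nabla_l h^{kl} \mu \) and the same formula \( \tangent_g S(h) = \nabla_i \nabla_j h^{ij} \)), so it is a genuine cross-check rather than a circularity, and, as you say, it fixes your convention constant while leaving an ambiguity in \( \deRCohomology^1(M, \R) \) that only your direct variation resolves.
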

\begin{proof}
This follows from the general results of \parencite{DiezRatiu}. To keep this
paper self-contained, we give here a direct proof in our particular case 
under the additional assumption that the first singular homology group of \( M \) 
is trivial.

We need to determine the derivative of the holonomy map of \( \KBundle_g M \) 
with respect to \( g \).
For this, let \( \gamma \) be a closed loop at \( m \in M \).
As \( \sHomology_1(M, \Z) \) is trivial, there exists a smooth contraction 
\( \Sigma: [0, 1] \times [0, 1] \to M \) of \(\gamma\) to its base point \( m\).
Let \( \Hol_g (\gamma) \) be the holonomy of \( \gamma \) relative to the 
connection on the canonical bundle \( \KBundle_g M \) induced by the Levi--Civita 
connection of \( g \). The Stokes  theorem implies
\begin{equation}
\SectionMapAbb{J}(g) (\gamma) = \Hol_g (\gamma) = \exp \int_{[0, 1] \times [0, 1]} 
\Sigma^* \, (- S_g \mu) \equiv \exp \Bigl( - \int_\Sigma S_g \mu \, \Bigr),
\end{equation}
because the connection on \( \KBundle_g M \) has curvature \( - S_g \mu \).
According to \parencite[Lemma~2.4.1]{MarsdenEbinEtAl1972}, the derivative of the 
map \( S: g \mapsto S_g \) in the direction \( h \) 
is given by
\begin{equation}
	\label{eq:derivScalarCurvPrelim}
	\tangent_g S (h) 
		= \laplace (\tensor{h}{^i_i}) + \nabla_i \nabla_j h^{ij} - R_{ij} h^{ij}.
\end{equation}
As \( M \) is \( 2 \)-dimensional, the Ricci curvature \( R_{ij} \) of \( g \) 
satisfies \( R_{ij} = \frac{S_g}{2} g_{ij} \).
Thus, the last term in~\eqref{eq:derivScalarCurvPrelim} is proportional to the 
trace of \( h \).
However, if \( h \) is a tangent vector at \( g \) to \( \MetricSpace_\mu \), 
then its trace vanishes and we then get
\begin{equation}
	\tangent_g S (h) 
		= \nabla_i \nabla_j h^{ij}.
\end{equation}
Hence, for the logarithmic derivative of the holonomy of \( \gamma \), we obtain
\begin{equation}
	\bigl(\difLog \Hol (\gamma)\bigr)_g (h)
		= - \int_\Sigma \nabla_i \nabla_j h^{ij} \, \mu
\end{equation}
for \( g \in \MetricSpace_\mu \) and \( h \in \TBundle_g \MetricSpace_\mu \).

Since \( \deRCohomology^1(M, \R) = 0 \), the exterior differential 
\( \dif: \DiffFormSpace^1(M) \to \DiffFormSpace^2(M) \) yields an isomorphism 
of \( \DiffFormSpace^1(M) \slash \dif \DiffFormSpace^0(M) \) with 
\( \dif \DiffFormSpace^1(M) \subseteq \DiffFormSpace^2(M) \).
It suffices to show that the \( 1 \)-form \( \alpha_i = 
\tensor{\mu}{_i_k} \nabla_l \tensor{h}{^k^l} \) satisfies \( \dif \alpha = 
- \nabla_k \nabla_l h^{kl} \, \mu \), because then
\begin{equation}
	\int_\gamma (\difLog \SectionMapAbb{J})_g (h)
		= \bigl(\difLog \Hol (\gamma)\bigr)_g (h)
		= \int_\Sigma \dif \alpha 
		= \int_\gamma \alpha
\end{equation}
and so \( (\difLog \SectionMapAbb{J})_g(h) = \alpha\mod\dif \DiffFormSpace^0(M) \) 
as \( \gamma \) was an arbitrary closed loop.
For this, note that every vector field \( Y^k \) satisfies
\begin{equation}
	\nabla_i (Y^k \mu_{kj}) - \nabla_j (Y^k \mu_{ki})
		= \bigl(\dif (Y \contr \mu)\bigr)_{ij}
		= (\difLie_Y \mu)_{ij}
		= (\divergence Y) \, \mu_{ij}
		= (\nabla_k Y^k) \, \mu_{ij} \, .
\end{equation}
Applying this identity with \( Y^k = \nabla_l \tensor{h}{^k^l} \), we find
\begin{equation}\begin{split}
	(\dif \alpha)_{ij}
		&= \nabla_i \alpha_j - \nabla_j \alpha_i \\
		&= \nabla_i (\tensor{\mu}{_j_k} \nabla_l \tensor{h}{^k^l}) - 
		\nabla_j (\tensor{\mu}{_i_k} \nabla_l \tensor{h}{^k^l}) \\
		&= - (\nabla_k \nabla_l \tensor{h}{^k^l}) \, \mu_{ij}
\end{split}\end{equation}
and the claim follows.
\end{proof}

\begin{refcontext}[sorting=nyt]{}
	\printbibliography
\end{refcontext}

\medskip

\begin{flushleft}
Tobias Diez \\
Max Planck Institute for Mathematics in the Sciences, \\
04103 Leipzig, Germany, \\
Institute of Applied Mathematics, \\
Delft University of Technology, \\
2628 XE Delft, Netherlands \\
E-mail address: \texttt{T.Diez@tudelft.nl}

Supported by the NWO grant 639.032.734.
\end{flushleft}
\begin{flushleft}
Tudor S. Ratiu \\
School of Mathematical Sciences, \\
Shanghai Jiao Tong University, Minhang District, \\
800 Dongchuan Road, 200240 China, \\
Section de Math\'ematiques, Universit\'e de Gen\`eve, \\
1211 Gen\`eve 4, \\
Ecole Polytechnique F\'ed\'eralede Lausanne, \\ 
1015 Lausanne,  Switzerland \\
E-mail address: \texttt{ratiu@sjtu.edu.cn, tudor.ratiu@epfl.ch}

Partially supported by National Natural Science Foundation of China grant 11871334 and NCCR SwissMAP grant of the Swiss National Science Foundation.
\end{flushleft}

\end{document}